\newtheorem{theorem}{Theorem}
\theoremstyle{definition}
\newtheorem{defn}{Definition}[section]
\crefname{section}{§}{§§}
\Crefname{section}{§}{§§}
\newtheorem{thm}{Theorem}[section]
\newtheorem{lem}[thm]{Lemma}
\newtheorem{propos}[thm]{Proposition}
\newcommand{\enter}{\bigskip}
\begin{document}
\thispagestyle{empty} 

\author{Sonali Kaushik\footnote{${}$ Corresponding Author. Email Address: p20180023@pilani.bits-pilani.ac.in} and Rajesh Kumar\\
{Department of Mathematics, BITS Pilani}, \\
{Pilani Campus, Rajasthan-333031, India.}}

\title{Existence and density conservation using a non-conservative approximation for Safronov-Dubovski aggregation equation}
\maketitle

\begin{quote}
{\textit{ \textbf{Abstract.}
The paper deals with the global existence and density conservation for the Safronov-Dubovski equation for three different coefficients $\phi$ such that $\phi_{i,j} \leq \frac{(i+j)}{\min\{i,j\}}$, $\phi_{i,j} \leq (i+j)$ and $\phi_{i,j} \leq (1+i+j)^{\alpha}$ $\forall$ $i,j \in \mathbb{N}$, $\alpha \in [0,1]$. The non-conservative approximation is applied to study the problem and results such as Helly's selection theorem and the refined version of the De la Vall\'ee-Poussin theorem are implemented to establish the existence of each case of the kernel. The article also focuses on the conditions for the conservation of mass per unit volume for such an equation.} 
}

\end{quote}

\textit{\textbf{Keywords.} Safronov-Dubovski aggregation equation; Unbounded kernel; Non-conservative approximation; Existence; Density conservation}
\section{Prologue}
The Oort-Hulst-Safronov (OHS) equation \cite{oort1946gas, safronov1972evolution} is given by the following expression
\begin{equation}\label{contohs}
\partial_t c(t, x)=-\partial_x\left(c(t, x) \int_{0}^{x} y \phi(x, y) c(t, y)dy\right)-c(t, x) \int_{x}^{\infty} \phi(x, y) c(t, y)dy,
\end{equation}	
for $x,t \in \mathbb{R}^{+}:=[0,+\infty)$. The function $c(t,x)$ is the concentration of particles formed as a result of mutual interaction between particles of different masses (volumes). The term $\phi(x,y)$ is the coalescence coefficient and it represents the frequency of collision between particles of masses $x$ and $y$.  The detail explanation of the terms in the equation (\ref{contohs}) can be found in \cite{safronov1972evolution, lachowicz2003oort}. The Safronov-Dubovski equation was first analyzed in the article  \cite{dubovski1999atriangle} by Dubovski and then studied by several researchers namely Bagland \cite{bagland2005convergence}, Dubovski \cite{dubovski1999atriangle, dubovski1999structural, dubovski2000new}, Davidson \cite{davidson2014existence, davidson2016mathematical}, Kaushik et al. \cite{kaushik2022global,kaushik2022existence, kaushik2022theoretical}. The equation is expressed as
\begin{equation}\label{sd}
\frac{dc_i(t)}{dt}=c_{i-1}(t) \sum_{j=1}^{i-1} j \phi_{i-1,j} c_j(t) -c_i(t) \sum_{j=1}^{i} j \phi_{i,j} c_j(t) -\sum_{j=i}^{\infty} \phi_{i,j}c_i(t)c_j(t),
\end{equation} 
when $t \geq 0$ and the concentration values at $t=0$ is taken as
\begin{equation}\label{sdic}
c_i(0)=c_i^{0} \geq 0.
\end{equation}
This equation describes the collision of particles of mass $i s_0$ and $j s_0$, with $s_0>0$ being the mass of the smallest particle in the system. This collision creates monomers from the smaller cluster that leads to the sticking of the monomers with the larger cluster. The first summation represents the coagulation of an $(i-1)$-mer and a monomer and creates a $i$-sized particle in the dynamics of the system. Followed by this, the two terms denote the destruction of an $i$-mer. The penultimate term is symbolic of the formation of particles of mass larger than $i s_0$. The last expression takes its place in the equation when a $i$-sized clump dissolves into dust and merge with a $j-$ sized particle. The variables $c_i(t)$ denote the concentration of particles of mass $is_0$ at certain point of time $t$ and $\phi_{i,j}$, $i \neq j$ defines the rate at which bundles of size $i$ and $j$ collide and is called the coagulation kernel. 
 The kernel $\phi_{i,j} \geq 0$ is symmetric, i.e, $\phi_{i,j}=\phi_{j,i}$. \enter

The moments of the solution are of utmost importance in dealing with the boundedness of important quantitites. The $r^{\text{th}}$ moment has been defined in the literature as
\begin{equation}\label{rthmoment}
\mu_r(t) = \sum_{i=1}^{\infty} i^r c_i(t).
\end{equation}
If we take $r=0$, the quantity $\mu_0(t)$ defines the total number of particles and $r=1$ gives us the total mass of the particles. Putting $r=2$ defines the second moment of the solution which can be interpreted as the energy dissipated by the system \cite{dubovskiui1994mathematical}. \\


\subsection{Existing Literature}

Dubovski \cite{dubovski1999structural}, in 1999, developed the model (\ref{sd}) and demonstrated preliminary results for the general kernel $\phi_{i,j}$ including the non-negativity and mass conservation law. The author also discussed the significance of the Oort-Hulst-Safronov model in calculating coagulation front velocity (the rate of displacement of the boundary of the non-zero values of the distribution function). Dubovski also emphasized the relationship between the coagulation front velocity and the mass conservation law, stating that if the velocity escapes to infinity, the equation does not obey this law.
Furthermore, Lachowicz et al. \cite{lachowicz2003oort} pointed out differences between the OHS model (\ref{contohs}) and continuous version of Smoluchowski \cite{smoluchowski1917mathematical} equation and with the help of an $\epsilon$ dependent family of coagulation equations (where $\epsilon \in (0,1]$), the former is shown to lead as $\epsilon \to 0$ and when $\epsilon=1$, it is noticed to correspond to the latter.

 The article \cite{laurenccot2005convergence} studied the self-similar profiles by finding Lyapunov functions for the constant parameter $\phi \equiv 1$. In \cite{bagland2007self}, the authors established the existence for the kernel
$\phi(x,y)=x^\lambda+y^{\lambda},\hspace{.1 cm} 0 < \lambda <1 .$
The author \cite{LAURENCOT200680} analyzed the OHS model \eqref{contohs} and studied the conditions for \textit{collapse} \cite{dubovski1999structural} when the parameter in consideration is $\phi(x,y)=xy$. An article by Barik et al. \cite{barik2022mass} dealt with the global existence for the continuous OHS equation (\ref{contohs}) for a singular paramter.
The article \cite{bagland2005convergence} discussed the existence of solution for the equation (\ref{sd}) in case of the kernel of the form, $$\lim_{l \to \infty} \frac{\phi_{i,l}}{l}=0.$$
In 2014, Davidson \cite{davidson2014existence} proved the global existence of the solution for the bounded coagulation parameter of the form $j \phi_{i,j} \leq M$ where $M$ is a constant and in addition to existence, the density is also shown to be conserved for unbounded kernels $\phi_{i,j}\leq C_{\phi} h_i h_j$ where $\frac{h_i}{i} \to 0$ as $i \to \infty$. His work includes uniqueness but only for the bounded kernel of the form $\phi_{i,j} \leq C_{\phi}$.
Recently, in 2022, Kaushik et al. \cite{kaushik2022existence, kaushik2022theoretical} proved the existence, density conservation and uniqueness for the kernel of the form $\phi_{i,j} \leq \frac{(i+j)}{\min\{i,j\}}$ and discussed the differentiability, existence and density conservation for the kernel, $\phi_{i,j} \leq (i+j)$. However, the existence and mass conservation of the equation is not yet analyzed in case of the non-conservative approximation [see \cite{barik2020mass} for the coagulation-fragmentation equation].

\medskip

So, this article aims to prove the global existence result and establish the density conservation for the kernels $\phi_{i,j} \leq \frac{(i+j)}{\min\{i,j\}}$, $\phi_{i,j} \leq (i+j)$ and $\phi_{i,j} \leq (1+i+j)^{\alpha}$ $\forall$ $i,j \in \mathbb{N}$, $\alpha \in [0,1]$ when (\ref{sd}) is approximated using non-conservative form. 
The existence of the solution is proved by assuming that $\mu_{1,n}(0)$ is finite and $\mu_{1,n}(t)$ is bounded.
To prove the density conservation, an additional physical property of the kernel is required, i.e.,
 \begin{equation}
 \label{compactsupp}
 \phi_{n,j} \to 0 \hspace{.2 cm} \text{as} \hspace{.2 cm} n \to \infty \hspace{.2 cm}  \text{for every} \hspace{.2 cm} j  \in [1,(n-1)],
 \end{equation} 
%
  
 The novelty of this work is that the existence, density conservation of the solution for the discrete Safronov-Dubovski equation is established by approximating the equation using the non-mass conserving truncated system. 
 The refined version of De la Vall\'ee-Poussin theorem is used to prove the global existence and some properties of the convex function ensure the conservation of the first moment, i.e., mass.
\medskip

The definitions that are essential to lay the groundwork for the results are mentioned. 
The set of finite mass sequences is defined by 
\begin{equation}\label{spaceX}
B=\{z=(z_{d}): ||z|| <\infty\},
\end{equation}
with 
\begin{equation}\label{norm1}
||z||:=\sum_{d=1}^{\infty} d |z_{d}|,
\end{equation}
where $(B, \|\cdot\|)$ is a Banach space. Since, only the non-negative solutions of \eqref{sd}-\eqref{sdic} are of interest, we consider the non-negative cone
\begin{equation}\label{coneX+}
B^{+}= \{c=(c_{i}) \in B: c_{i} \geqslant 0\}.
\end{equation}
The concentration $c_i(t)$ is defined below as 
\begin{defn} \label{defsol}
Let $T \in (0,\infty].$ A solution $c=(c_i)$ of the initial value problem \eqref{sd}--\eqref{sdic} is a function 
$c:[0,T) \to B^{+}$ such that 
\begin{enumerate}
\item $\forall \hspace{.1cm} i, \hspace{.1cm} c_i$ is continuous and $\sup_{0\leq t < T} ||c|| <\infty$,
\item $\int_{0}^{t} \sum_{j=1}^{\infty} \phi_{i,j}c_j(h) dh <\infty$ for every $i$ and $\forall \hspace{.2cm} 0 \leq t <T$,
\item
\begin{equation}\label{solution}
c_i(t)=c_{i}(0)+\int_{0}^{t}\Bigl(\delta_{i\geqslant 2} c_{i-1}(h)\sum_{j=1}^{i-1} j \phi_{i-1,j} c_j(h)-c_i(h)\sum_{j=1}^{i} j \phi_{i,j} c_j(h)-
c_i(h)\sum_{j=i}^{\infty} \phi_{i,j}c_j(h)\Bigr) dh,
\end{equation}
\end{enumerate}
where $\delta_P=1$ if $P$ is true, and is zero otherwise. 
\end{defn}

The paper is organized as follows: In Section 2, finite-dimensional non-conservative system for the equation (\ref{sd}) and the required estimates on the properties of the system which shall be used in Section 3 that presents the global existence theorem are also presented.
The proof of the conservation of the first moment for equation (\ref{sd}) is discussed in Section 4.
\section{A Truncated System and Required Results}\label{fds}
The section addresses a method for dealing with an infinite system of ordinary differential equations comprising of infinite summations and analyzes the properties of the said system after its conversion to a finite system of equations. The estimates obtained for the parameters involved in the finite dimensional equation aids in passing the limit $n \to \infty$ which enables in proving important results for the concerned model (\ref{sd})-(\ref{sdic}).

We present an approximating system of equations which do not conserve the first moment. The system is hence called \emph{non-conservative} and is defined as
\begin{equation}\label{truncation2}
\frac{dc^n_i}{dt}= C^{n}_i(c), 
\qquad \text{for $1 \leq i \leq n$,}
\end{equation}
where
\begin{align}
C^{n}_1(c) &:= - \phi_{1,1}c_1^2-c_1\sum_{j=1}^{n}\phi_{1,j}c_j  \label{1aa'} \\
C^{n}_i(c) &:=c_{i-1}\sum_{j=1}^{i-1}j\phi_{i-1,j}c_j-c_i\sum_{j=1}^{i} j\phi_{i,j}c_j-c_i\sum_{j=i}^{n}\phi_{i,j}c_j , 
\quad \text{for $2 \leq i \leq n-1$,} \label{1a'}  \\
C^{n}_n(c) &:=c_{n-1}\sum_{j=1}^{n-1}j\phi_{n-1,j}c_j\label{1b'}
\end{align}
with the initial conditions as 
\begin{equation}\label{tic}
c_i(0)  = c_i^{0} \geq 0, \qquad\text{for $1\leq i \leq n$}.
\end{equation} We compute the $i^{th}$ moment of the solution of the finite dimensional system which aids in computing its zeroth and first moments.
\begin{lem}\label{lemma1}
Let $c=(c_i)_{i\in\{1, \ldots, n\}}$ be a solution of (\ref{truncation2})-(\ref{1b'})
defined in an open interval $I$ containing $0$. Let $w=(w_i)$ be a real sequence. Then
\begin{equation}
\frac{d\mu_w^n}{dt} = \sum_{i=1}^{n-1}|w_{i+1}-w_{i}|\sum_{j=1}^{i}j\phi_{i,j}c_ic_j-\sum_{i=1}^{n-1}\sum_{j=i}^{n}w_i\phi_{i,j}c_ic_{j}\label{momenteq2}.
\end{equation}
\end{lem}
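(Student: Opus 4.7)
The plan is to differentiate $\mu_w^n(t):=\sum_{i=1}^n w_i c_i^n(t)$ in time, substitute the finite system (\ref{truncation2})--(\ref{1b'}), and then reorganize the resulting double sums by a single reindexing trick. Explicitly, I would first write
$$\frac{d\mu_w^n}{dt} = w_1 C^n_1(c) + \sum_{i=2}^{n-1} w_i C^n_i(c) + w_n C^n_n(c),$$
treating the endpoint indices $i=1$ and $i=n$ separately because the non-conservative truncation handles them asymmetrically.

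Next, I would decompose each $C^n_i(c)$ into its three natural pieces: the production term $c_{i-1}\sum_{j=1}^{i-1} j\phi_{i-1,j}c_j$, the small-partner loss $-c_i\sum_{j=1}^{i} j\phi_{i,j}c_j$, and the large-partner loss $-c_i\sum_{j=i}^{n}\phi_{i,j}c_j$. Reading off the index ranges from (\ref{1aa'})--(\ref{1b'}) I would check that, after summing in $i$ with weights $w_i$, the production piece runs over $i\in\{2,\ldots,n\}$, while both loss pieces run over $i\in\{1,\ldots,n-1\}$ — the latter precisely because (\ref{1b'}) kills the loss terms at $i=n$, which is the defining feature of the non-conservative truncation.

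Now I would shift the production index via $k=i-1$, converting
$$\sum_{i=2}^{n} w_i\, c_{i-1}\sum_{j=1}^{i-1} j\phi_{i-1,j}c_j \;=\; \sum_{k=1}^{n-1} w_{k+1}\, c_k \sum_{j=1}^{k} j\phi_{k,j}c_j,$$
and combine it with the small-partner loss to get the telescoping coefficient
$$\sum_{i=1}^{n-1}(w_{i+1}-w_i)\sum_{j=1}^i j\phi_{i,j}c_ic_j,$$
which matches the first term on the right of (\ref{momenteq2}) (the absolute value in the statement is consistent with the derivation on non-decreasing $w$, the regime in which the lemma will be applied). The large-partner loss contributes exactly $-\sum_{i=1}^{n-1}\sum_{j=i}^{n} w_i\phi_{i,j}c_ic_j$, which is the second term of (\ref{momenteq2}).

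The only real subtlety — and the step most likely to trip up a reader — is the boundary bookkeeping, namely that (i) the missing production term in $C^n_1$ is compatible with the shifted sum starting at $k=1$ with weight $w_2$ rather than $w_1$, and (ii) the missing loss terms in $C^n_n$ are exactly what forces the upper limit $n-1$ on both sums of (\ref{momenteq2}). Once these two endpoint cancellations are verified, the identity follows from the single reindexing above with no additional estimates, so the proof is essentially a careful algebraic reorganization rather than an analytic argument.
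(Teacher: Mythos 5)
Your proposal is correct and is essentially the same argument as the paper's: the paper's proof splits $\sum_i w_i C^n_i$ into the $i=1$, $i=n$, and interior contributions and then verbally describes the pairwise combination of adjacent terms, which is exactly the telescoping you make explicit via the shift $k=i-1$. Your remark that the derivation actually yields $(w_{i+1}-w_i)$ rather than $|w_{i+1}-w_i|$ (harmless since the lemma is only applied with non-decreasing $w$) is a correct and worthwhile observation that the paper does not make.
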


\begin{proof}
Using the expressions \eqref{1aa'}--\eqref{1b'}, one can obtain
\begin{align*} 
\frac{d\mu_w^n}{dt} = \sum_{i=1}^{n} w_i C^{n}_i(c)
                                   & =  \underbrace{w_1\Bigl(-\phi_{1,1}c_1^2-c_1\sum_{j=1}^{n}\phi_{1,j}c_j\Bigr)}_{I} \, + \underbrace{w_n\Bigl(c_{n-1}\sum_{j=1}^{n-1}j\phi_{n-1,j}c_j\Bigr)}_{II}\\
                                    & \;\;\;\; +  \underbrace{\sum_{i=2}^{n-1} w_i\Bigl(c_{i-1}\sum_{j=1}^{i-1}j\phi_{i-1,j}c_j-c_i\sum_{j=1}^{i} j\phi_{i,j}c_j-c_i\sum_{j=i}^{n}\phi_{i,j}c_j\Bigr)}_{III}.
\end{align*}
To obtain the desired result, some simple but careful computations are required. One can proceed as follows: expand $III$ for each $i$, then combine the first term in $I$ and the first term in $III_1$, where $III_k$, $k=1,2 \cdots (n-1)$ denotes sub-brackets of $III$. Further, merge the second term of $III_1$ with the first term of $III_2$ and the second term of $III_2$ with the first term of $III_3$. Finally, combine $II$ and the second term of $III_{(n-1)}$.
\end{proof}
The validity of this truncation for a coagulation system of equation can be established by computing the rate of change in the number of particles of the finite dimensional system. To do so, putting $w_i=1$ in \eqref{momenteq2} and the non-negativity of $c_i,c_{j}$ and $\phi_{i,j}$ yield
\begin{equation*}
\frac{d\mu^n_0}{dt}=\sum_{i=1}^{n}C^{n}_{i}=-\sum_{i=1}^{n-1}\sum_{j=i}^{n}\phi_{i,j}c_ic_{j} \leq 0
\end{equation*} 
and thus $\mu_{0}^{n}(t) \leq \mu_{0}^{n}(0) \hspace{.2 cm} \forall t \in I \cap \{t \geq 0\}$
which ensures that the above mentioned truncation is valid.\medskip

Taking $w_1=i$ in \eqref{momenteq2}, we obtain 
\begin{equation*}\frac{d\mu_1^{n}(t)}{dt}=\sum_{i=1}^{n} i C^n_{i}=\sum_{i=1}^{n-1}\sum_{j=1}^{i} j\phi_{i,j}c_{i,j}-\sum_{i=1}^{n-1}\sum_{j=i}^{n}i\phi_{i,j}c_{i}c_{j}.
\end{equation*}
Now, adding and subtracting some terms, changing the order of summation and replacing $i \leftrightarrow j$ in the second term, it is easy to see that
\begin{equation}\label{truncated1moment}
\frac{d\mu_1^{n}(t)}{dt}=- \sum_{j=1}^{n-1}j \phi_{n,j}c_{n}c_{j} \leq 0,
\end{equation}
which means that solutions to the truncated system do not conserve mass, i.e.,
\begin{equation}\label{truncatedmassnonconservation}
\mu^{n}_1(t) \leq \mu^{n}_1(0), \quad \text{for} \hspace{.1 cm} \text{any} \hspace{.2 cm} t.
\end{equation}
When the approximating system does not conserve the first moment, the global existence of the solution is established 
by the application of Helly's theorem (see \cite{kolmogorov1975introductory}) to $\{c^{n}_{i}(t)\}$. The theorem requires us to prove that the truncated solution is of locally bounded total variation and uniformly bounded at a point. This is achieved by the application of Lemma \ref{second}, discussed later on. In addition to this, we make use of the refined version of De la Vall\'ee- Poussin theorem (see \cite{filbet2004mass, dellacherie1975probabilites}), which ensures that if $c^{0}_{i}$ is in weighted $L^1$ space, there exists a non-negative convex function $\gamma \in G$ where
\begin{equation*}
G :=\{\gamma \in C^1([0,\infty))\cap W^{1,\infty}_{loc}(0,\infty): \gamma(0)=0,\gamma'(0) \geq 0, \gamma' \hspace{.15 cm} \text{is \hspace{.02 cm} concave \hspace{.02 cm} and} \hspace{.15 cm} \lim_{r \to \infty} \frac{\gamma(r)}{r}= \infty\},
\end{equation*}
satisfying certain properties given by the following proposition.

\begin{propos}\label{prop1}
Let, $i,j \geq 1$ and $\gamma \in G$, then the following holds true $$0 \leq \gamma(i+1)-\gamma(i) \leq \frac{(3i+1)\gamma(1)+2 \gamma(i)}{(i+1)}.$$
\end{propos}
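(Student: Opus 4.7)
The lower bound $\gamma(i+1)-\gamma(i) \geq 0$ reduces to showing $\gamma$ is nondecreasing. The plan is to observe that the growth condition $\gamma(r)/r \to \infty$ combined with L'Hospital forces $\gamma'(r) \to \infty$; meanwhile $\gamma'$ is concave with $\gamma'(0)\geq 0$, so $\gamma'$ cannot decrease anywhere -- otherwise a single negative secant slope would, by concavity, propagate to every larger argument and drive $\gamma'$ to $-\infty$. Hence $\gamma' \geq \gamma'(0) \geq 0$ and the lower bound follows from $\gamma(i+1)-\gamma(i)=\int_i^{i+1}\gamma'(s)\,ds$.

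For the upper bound I would first extract two auxiliary estimates. The chord inequality $\gamma'(s) \geq s\gamma'(1) + (1-s)\gamma'(0)$ on $[0,1]$, valid by concavity of $\gamma'$, integrates to $\gamma(1) \geq \gamma'(1)/2$, yielding $\gamma'(1) \leq 2\gamma(1)$. Next, setting $h(k):=\gamma(k+1)-\gamma(k)$ and $\delta_k:=h(k)-h(k-1)=\int_0^1[\gamma'(k+t)-\gamma'(k-1+t)]\,dt$, the fact that $s\mapsto\gamma'(s+1)-\gamma'(s)$ is nonincreasing on $[0,\infty)$ (second differences of a concave function are nonpositive) delivers at once the monotonicity $0 \leq \delta_{k+1}\leq\delta_k$ and the uniform bound $\delta_k \leq \gamma'(1)-\gamma'(0) \leq 2\gamma(1)$.

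Using the telescoping identities $h(i)=\gamma(1)+\sum_{j=1}^{i}\delta_j$ and $\gamma(i)=i\gamma(1)+\sum_{j=1}^{i-1}(i-j)\delta_j$, a direct rearrangement shows that the target inequality $(i+1)h(i)\leq(3i+1)\gamma(1)+2\gamma(i)$ is equivalent to
\begin{equation*}
(i+1)\delta_i + \sum_{j=1}^{i-1}(2j-i+1)\delta_j \leq 4i\gamma(1).
\end{equation*}

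The main obstacle is that the weights $2j-i+1$ become negative for $j<(i-1)/2$, so the termwise estimate $\delta_j \leq 2\gamma(1)$ is too crude to conclude directly. I would resolve this by summation by parts. Setting $a_j:=2j-i+1$ for $j<i$ and $a_i:=i+1$, the partial sums $A_k=\sum_{j=1}^k a_j = k(k-i+2)$ are nonpositive for $k\leq i-2$ while $A_{i-1}=i-1$ and $A_i=2i$; the Abel identity
\begin{equation*}
\sum_{j=1}^{i}a_j\delta_j = A_i\delta_i + \sum_{j=1}^{i-1}A_j(\delta_j-\delta_{j+1})
\end{equation*}
together with $\delta_j - \delta_{j+1} \geq 0$ makes the contributions with $j\leq i-2$ nonpositive and hence discardable. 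What remains is $\sum a_j\delta_j \leq A_i\delta_i + A_{i-1}(\delta_{i-1}-\delta_i) = (i+1)\delta_i + (i-1)\delta_{i-1}$, and the uniform bound $\delta_i,\delta_{i-1}\leq 2\gamma(1)$ delivers precisely $4i\gamma(1)$, closing the argument.
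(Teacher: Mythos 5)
Your argument is correct, and I checked the two places where it could plausibly break. The reduction of the target inequality to $(i+1)\delta_i+\sum_{j=1}^{i-1}(2j-i+1)\delta_j\leq 4i\gamma(1)$ is an exact algebraic identity given your telescoping formulas for $h(i)$ and $\gamma(i)$, and the Abel-summation step works as claimed: $A_k=k(k-i+2)\leq 0$ for $k\leq i-2$, $A_{i-1}=i-1$, $A_i=2i$, and $\delta_j-\delta_{j+1}\geq 0$ kills the interior terms, leaving $(i+1)\delta_i+(i-1)\delta_{i-1}\leq 4i\gamma(1)$ via $\delta_k\leq\gamma'(1)-\gamma'(0)\leq\gamma'(1)\leq 2\gamma(1)$. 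The constants are exactly right: for $\gamma(r)=r^2$ one has $\delta_k\equiv 2=2\gamma(1)$ and both sides of the proposition equal $2i+1$, so your chain of inequalities is sharp, which is a good consistency check. One cosmetic quibble on the lower bound: the remark that L'Hospital forces $\gamma'(r)\to\infty$ is both imprecise (the implication goes the wrong way) and unnecessary; the argument you actually use --- a concave $\gamma'$ that decreases anywhere tends to $-\infty$, which would force $\gamma(r)/r\to-\infty$ and contradict the growth condition, hence $\gamma'$ is nondecreasing and $\gamma'\geq\gamma'(0)\geq 0$ --- is complete on its own, so I would simply delete the L'Hospital clause.

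As for comparison with the paper: there is essentially nothing to compare against, since the paper's ``proof'' consists of the single sentence that the result can be adapted from Lemma A.2 of Laurençot (2001). Your write-up is therefore a genuine contribution rather than a variant: it is a self-contained, elementary verification built on the discrete second differences $\delta_k=\gamma(k+1)-2\gamma(k)+\gamma(k-1)$, whose nonnegativity, monotonicity, and uniform bound $\delta_k\leq 2\gamma(1)$ encode precisely the convexity of $\gamma$ and concavity of $\gamma'$ that the cited lemma exploits in continuous form. The cited route typically works directly with pointwise bounds on $\gamma'$ such as $r\gamma'(r)\leq 2\gamma(r)$; your summation-by-parts argument trades that for combinatorial bookkeeping but has the advantage of being checkable line by line without consulting the reference, and it makes visible where the constants $3i+1$ and $2$ come from.
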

\begin{proof}
The proof can be adapted from Lemma A.2 in \cite{laurenccot2001lifshitz}.
\end{proof}
Now, the results which will be useful in proving the existence are presented below.
\begin{lem}\label{first}
Consider $T \in (0,\infty)$, $t \in [0,T]$, and $\gamma \in G$, then  for every $n\geq 2$ and $\phi_{i,j} \leq \frac{(i+j)}{\min\{i,j\}}$, $\phi_{i,j} \leq (i+j)$ and $\phi_{i,j} \leq (1+i+j)^{\alpha}$ $\forall$ $i,j \in \mathbb{N}$, $\alpha \in [0,1]$, the following holds
\begin{equation}
\label{qt}
\sum_{i=1}^{n} \gamma(i)c_{i}^{n}(t) \leq Q_b(T) \hspace{.5 cm} \text{and} 
\end{equation}
\begin{equation}
\label{qtt}
0 \leq \int_{0}^{t}\left(\sum_{i=1}^{n-1}\sum_{j=i}^{n} \gamma(i) \phi_{i,j}c^{n}_{i}(h)c^{n}_{j}(h)\right)dh\ \leq Q_b(T),
\end{equation}
where $Q_b(T)$ depends only on $\gamma(1), ||c_{0}||$ and $\mu_0(0)$ for $b=1,2,3$.
\end{lem}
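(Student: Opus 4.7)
The plan is to apply Lemma \ref{lemma1} with $w_i = \gamma(i)$ and bound the resulting identity using Proposition \ref{prop1} together with the three kernel hypotheses. Since Proposition \ref{prop1} shows $\gamma(i+1) \geq \gamma(i)$ for every $i \geq 1$, we have $|\gamma(i+1)-\gamma(i)| = \gamma(i+1)-\gamma(i)$, and Lemma \ref{lemma1} yields
\begin{equation*}
\frac{d\mu_\gamma^n}{dt} + \sum_{i=1}^{n-1}\sum_{j=i}^{n} \gamma(i)\phi_{i,j} c_i^n c_j^n = \sum_{i=1}^{n-1}(\gamma(i+1)-\gamma(i)) c_i^n \sum_{j=1}^{i} j\phi_{i,j} c_j^n.
\end{equation*}
The goal is therefore to dominate the right-hand side by an expression of the form $C_1(b) + C_2(b)\mu_\gamma^n(t)$ with constants independent of $n$; from such a differential inequality, \eqref{qt} will follow by dropping the non-negative dissipation sum on the left and applying Gronwall, while \eqref{qtt} will follow by integrating in time and substituting the resulting bound on $\mu_\gamma^n$.

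To control the right-hand side, I would use Proposition \ref{prop1} in the form $\gamma(i+1)-\gamma(i) \leq 3\gamma(1) + 2\gamma(i)/(i+1)$, and bound the inner sum $\sum_{j=1}^{i} j\phi_{i,j} c_j^n$ case by case. For $b=1$, since $j \leq i$ gives $\phi_{i,j} \leq (i+j)/j$, one has $j\phi_{i,j} \leq i+j$ and the inner sum is at most $i\mu_0^n(0) + ||c^0||$. For $b=2$, the estimate $j\phi_{i,j} \leq j(i+j) \leq 2ij$ (again using $j \leq i$) bounds the inner sum by $2i||c^0||$. For $b=3$, the elementary inequality $(1+i+j)^\alpha \leq 1+i+j$ valid for $\alpha \in [0,1]$ reduces matters to case $b=2$ up to a constant factor. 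In each case, multiplying by $\gamma(i+1)-\gamma(i)$, invoking $i(\gamma(i+1)-\gamma(i)) \leq 3i\gamma(1) + 2\gamma(i)$ together with $i\gamma(i)/(i+1) \leq \gamma(i)$, and summing over $i$, the right-hand side is controlled by a quantity of the form $C_1(b) + C_2(b)\mu_\gamma^n(t)$ with constants depending only on $\gamma(1)$, $\mu_0^n(0)$, and $||c^0||$.

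The initial moment $\mu_\gamma^n(0) \leq \sum_{i \geq 1} \gamma(i) c_i^0$ is finite by the refined De la Vall\'ee-Poussin theorem, which supplies such a $\gamma \in G$ from the hypothesis $||c^0|| < \infty$. The main obstacle will be case $b=2$: the naive bound $j\phi_{i,j} \leq j(i+j)$ produces a $j^2 c_j$ contribution that would require the (uncontrolled) second moment. The remedy is the observation that the inner sum runs only over $j \leq i$, so $j^2 \leq ij$ and the offending term is absorbed into $i||c^0||$. A secondary subtlety is retaining the factor $\gamma(i)/(i+1)$ in the Proposition \ref{prop1} bound so that, after being multiplied by the factor $i$ coming from the kernel estimate, only $\mu_\gamma^n$ (rather than the heavier $\sum_i i\gamma(i) c_i^n$) appears on the right-hand side of the Gronwall inequality.
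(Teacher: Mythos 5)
Your proposal is correct and follows essentially the same route as the paper: substitute $w_i=\gamma(i)$ into Lemma \ref{lemma1}, drop (or keep, for \eqref{qtt}) the non-negative dissipation sum, bound $\gamma(i+1)-\gamma(i)$ via Proposition \ref{prop1}, exploit $j\leq i$ in the inner sum so that each kernel case closes with only $\mu_0$, $\|c^0\|$ and $\mu_\gamma^n$ appearing, and finish with Gronwall, recovering \eqref{qtt} from the integrated identity. The two subtleties you flag — absorbing $j^2\leq ij$ to avoid the second moment, and using the $1/(i+1)$ factor from Proposition \ref{prop1} to cancel the $i$ from the kernel — are exactly the points the paper's estimates hinge on.
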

\begin{proof}
Using $w_{i}=\gamma(i)$ in the equation (\ref{momenteq2}) leads to 
\begin{equation}\label{l}
\sum_{i=1}^{n} \gamma(i)c_{i}^{n}(t)=\sum_{i=1}^{n}\gamma(i)c^{0}_{i}+\int_{0}^{t}\Big(\sum_{i=1}^{n-1}|\gamma(i+1)-\gamma(i)|\sum_{j=1}^{i}j\phi_{i,j} c_{i}^{n}(h)c_{j}^{n}(h)-\sum_{i=1}^{n-1}\sum_{j=i}^{n}\gamma(i)\phi_{i,j}c_{i}^{n}(h)c_{j}^{n}(h)\Big)dh.
\end{equation}
Since $\gamma(k)$ and $c_{i}^{n}$ for $k=i,j$ are non-negative, the above expression simplifies to
\begin{equation*}
\sum_{i=1}^{n} \gamma(i)c_{i}^{n}(t) \leq \sum_{i=1}^{n}\gamma(i)c^{0}_{i}+ \int_{0}^{t}\Big(\sum_{i=1}^{n-1}|\gamma(i+1)-\gamma(i)|\sum_{j=1}^{i}j\phi_{i,j} c_{i}^{n}(h)c_{j}^{n}(h)\Big)dh.
\end{equation*}
Using Proposition \ref{prop1}, replacing $\phi_{i,j} \leq \frac{(i+j)}{j}$, $\phi_{i,j} \leq (i+j)$ and $\phi_{i,j} \leq (1+i+j)^{\alpha}$ respectively will give us the following expressions
\begin{align*}
\sum_{i=1}^{n} \gamma(i)c_{i}^{n}(t)
&\leq \sum_{i=1}^{n}\gamma(i)c^{0}_{i}+\int_{0}^{t}\Big(\sum_{i=1}^{n-1}\sum_{j=1}^{i}[(3i+1)\gamma(1)+2\gamma(i)] 2 c_{i}^{n}(h)c_{j}^{n}(h)\Big)dh\\
&\leq \sum_{i=1}^{n}\gamma(i)c^{0}_{i}+\int_{0}^{t}\Big(Q_{1,1}(T)+Q_{1,2}(T)\sum_{i=1}^{n}\gamma(i)c^{n}_{i}(h)\Big)dh,
\end{align*}
\begin{align*}
\sum_{i=1}^{n} \gamma(i)c_{i}^{n}(t)
&\leq \sum_{i=1}^{n}\gamma(i)c^{0}_{i}+\int_{0}^{t}\Big(\sum_{i=1}^{n-1}\sum_{j=1}^{i}[(3i+1)\gamma(1)+2\gamma(i)] 2j c_{i}^{n}(h)c_{j}^{n}(h)\Big)dh\\
&\leq \sum_{i=1}^{n}\gamma(i)c^{0}_{i}+\int_{0}^{t}\Big(Q_{2,1}(T)+Q_{2,2}(T)\sum_{i=1}^{n}\gamma(i)c^{n}_{i}(h)\Big)dh,
\end{align*}
\begin{align*}
\sum_{i=1}^{n} \gamma(i)c_{i}^{n}(t)
&\leq \sum_{i=1}^{n}\gamma(i)c^{0}_{i}+\int_{0}^{t}\Big(\sum_{i=1}^{n-1}\sum_{j=1}^{i}\frac{[(3i+1)\gamma(1)+2\gamma(i)]}{i} j(1+2i)^{\alpha} c_{i}^{n}(h)c_{j}^{n}(h)\Big)dh\\
&\leq \sum_{i=1}^{n}\gamma(i)c^{0}_{i}+\int_{0}^{t}\Big(Q_{3,1}(T)+Q_{3,2}(T)\sum_{i=1}^{n}\gamma(i)c^{n}_{i}(h)\Big)dh,
\end{align*}
where $Q_{1,1}(T)=2 \gamma(1) \mu_0(0)(\mu_0(0)+3\|c_0\|)$, $Q_{1,2}(T)=4 \mu_0(0)$ and $Q_{2,1}(T)=2 \gamma(1) ||c_0||(\mu_0(0)+3\|c_0\|)$, $Q_{2,2}(T)=4 ||c_0||$ and $Q_{3,1}(T)= \gamma(1) ||c_0||(9||c_0||+2\mu_0(0))$, $Q_{3,2}(T)=6 ||c_0||$ respectively.
Finally, an application of Gronwall's inequality proves (\ref{qt}) where $Q_b(T)=\sum_{i=1}^{n}\gamma(i)c^{0}_{i}+\frac{Q_{b,1}(T)}{Q_{b,2}(T)}\left(e^{T Q_{b,2}(T)}-1\right)$. Combining (\ref{qt}) and (\ref{l}) leads to
\begin{equation*}
\int_{0}^{t}\left(\sum_{i=1}^{n-1}\sum_{j=i}^{n} \gamma(i) \phi_{i,j}c^{n}_{i}(h)c^{n}_{j}(h)\right)dh \leq Q_b(T),
\end{equation*}
for every $b=1,2,3$ which establishes (\ref{qtt}).
\end{proof}
\begin{lem}\label{second}
Consider $T \in (0,\infty)$, $i \in \mathbb{N}$ and $\phi_{i,j} \leq \frac{(i+j)}{\min\{i,j\}}$, $\phi_{i,j} \leq (i+j)$ and $\phi_{i,j} \leq (1+i+j)^{\alpha}$ $\forall$ $i,j \in \mathbb{N}$, $\alpha \in [0,1]$. The following result holds true for a constant $\tilde{Q}_b(T)$
\begin{equation}
\label{qt2}
\sup_{0\leq t \leq T}\int_{0}^{t} \Big|\frac{d c^{n}_{i}}{dt}\Big|dh =\tilde{Q}_b(T),
\end{equation}
 which depends on $||c_{0}||, \mu_0(0)$ and $T$ 
\begin{proof}
Using the equations (\ref{truncation2}) and (\ref{1a'}) yield for $\phi_{i,j} \leq \frac{(i+j)}{\min\{i,j\}}$
\begin{align*}
\int_{0}^{t} \Big|\frac{d c^{n}_{i}}{dt}\Big|dh &\leq \int_{0}^{t}\Bigl(\Big|c^{n}_{i-1}(h)\sum_{j=1}^{i-1}j \phi_{i-1,j}c^{n}_{j}(h)\Big|+\Big|c^{n}_{i}(h)\sum_{j=1}^{i}j \phi_{i,j} c^{n}_{j}(h)\Big|+\Big|c^{n}_{i}(h)\sum_{j=i}^{\infty}\phi_{i,j}c^{n}_{j}(h)\Big|\Bigr)dh\\
&\,\,\,\,=\int_{0}^{t} \Bigl(2 \Big|c^{n}_{i}(h)\sum_{j=1}^{i}j \phi_{i,j} c^{n}_{j}(h)\Big|+\Big|c^{n}_{i}(h)\sum_{j=i}^{\infty}\phi_{i,j}c^{n}_{j}(h)\Big|\Bigr)dh\\
&\,\,\,\, \leq 6 \mu_0(0)||c_0|| T,
\end{align*}
and in case of $\phi_{i,j} \leq (i+j)$,
\begin{align*}
\int_{0}^{t} \Big|\frac{d c^{n}_{i}}{dt}\Big|dh \leq  4 ||c_0||^2 T+2 \mu_0(0)||c_0||T.
\end{align*}
Finally for the kernel $\phi_{i,j} \leq (1+i+j)^{\alpha}$, we have
\begin{align*}
\int_{0}^{t} \Big|\frac{d c^{n}_{i}}{dt}\Big|dh \leq 4 \mu_0(0)||c_0||T+ 4 ||c_0||^2 T+\mu_0(0)^2 T.
\end{align*}
Also, the absolute values of the equations (\ref{1aa'}) can easily be shown bounded by $2 ||c_0||^2+\mu_0(0)||c_0||$, $2 ||c_0||^2+\mu_0(0)||c_0||$ and $\frac{5}{2} ||c_0||^2+2 ||c_0||\mu_0(0)$ respectively for the three cases of parameters. In additon, we can show that the absolute values of the equations \eqref{1b'} are bounded by $2||c_{0}||\mu_0(0)$, $2 ||c_0||^2$ and  $2 ||c_0||^2$ respectively,
which completes the proof of the lemma.
\end{proof}
\end{lem}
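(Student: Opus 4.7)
The plan is to bound $\bigl|\tfrac{dc_i^n}{dt}\bigr| = |C_i^n(c)|$ by applying the triangle inequality to the (at most) three sum terms appearing in \eqref{1aa'}--\eqref{1b'}, producing an integrand that is uniformly controlled in $t \in [0,T]$ by a constant depending only on $\|c_0\|$, $\mu_0(0)$, and the kernel. Integrating over $[0,t] \subset [0,T]$ then contributes the factor $T$ in $\tilde Q_b(T)$. The middle-index case $2 \leq i \leq n-1$ governed by \eqref{1a'} is the genuine one; the boundary cases $i=1$ and $i=n$ use only a subset of the sums and are strictly easier, as the last two sentences of the lemma already indicate.

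The estimates rest on two preliminary bounds. First, the zeroth and first moments of $c^n$ are non-increasing: the computation preceding this lemma gives $\mu_0^n(t) \leq \mu_0(0)$, and \eqref{truncatedmassnonconservation} gives $\mu_1^n(t) \leq \|c_0\|$. Second, these imply the pointwise inequalities $c_i^n(t) \leq \mu_0(0)$ and, crucially, $i\, c_i^n(t) \leq \|c_0\|$; the latter is what lets us absorb the factors of $i$ that the kernel produces.

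I would then treat the three kernels in parallel. For $\phi_{i,j} \leq (i+j)/\min\{i,j\}$, the first two sums in \eqref{1a'} are of the form $c_k \sum_{j=1}^{k} j\phi_{k,j} c_j$ with $k \in \{i-1,i\}$; since $j \leq k$ forces $\min\{k,j\}=j$, we get $j\phi_{k,j} \leq k+j \leq 2k$, and the product is bounded by $2k\, c_k \mu_0^n \leq 2\|c_0\|\mu_0(0)$ after using $k c_k \leq \|c_0\|$. For the tail sum $c_i \sum_{j=i}^n \phi_{i,j} c_j$, the constraint $j \geq i$ forces $\min\{i,j\}=i$, so $\phi_{i,j} \leq 1+j/i$ gives $\sum_{j=i}^n \phi_{i,j} c_j \leq \mu_0^n + \mu_1^n/i$, and multiplying by $c_i \leq \mu_0(0)$ again produces a constant controlled by $\mu_0(0)$ and $\|c_0\|$. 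The kernels $\phi_{i,j} \leq i+j$ and $\phi_{i,j} \leq (1+i+j)^\alpha$ lack the $1/\min\{i,j\}$ boost, so one retains an extra factor of $j$ and closes with $\mu_1^n \leq \|c_0\|$ in place of $\mu_0^n$; the structure of the computation is otherwise identical, and this is why the bound for each kernel is a slightly different polynomial in $\mu_0(0)$ and $\|c_0\|$ times $T$.

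The only delicate point is keeping the bound uniform in both $i$ and $n$. The real threat lives in the tail term: $\phi_{i,j}$ grows in $j$, so $\sum_{j \geq i} \phi_{i,j} c_j$ cannot be controlled by $\mu_0$ alone and must be paid for with a factor of $\mu_1$. The coefficient this $\mu_1$ carries either involves $1/i$ (for the $(i+j)/\min\{i,j\}$ kernel) or must be cancelled against $i c_i \leq \|c_0\|$ (for the other two). This careful matching of moment bounds to kernel factors is the central mechanism, and once it is in place the remaining arithmetic is routine and yields $\tilde Q_b(T)$ explicitly as a linear function of $T$.
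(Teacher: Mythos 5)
Your proposal is correct and follows essentially the same route as the paper: apply the triangle inequality to the three sums in \eqref{1a'}, bound each term uniformly in $i$ and $n$ using $\mu_0^n(t)\leq\mu_0(0)$, $\mu_1^n(t)\leq\|c_0\|$ and the pointwise consequences $c_i^n\leq\mu_0(0)$, $i\,c_i^n\leq\|c_0\|$, then integrate over $[0,t]$ to pick up the factor $T$, treating the boundary cases $i=1$ and $i=n$ separately. Your discussion of how $\min\{i,j\}$ resolves on each index range and which moment pays for the tail sum is in fact more explicit than the paper's, which simply states the resulting constants.
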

Now, we can proceed to study the global existence result for (\ref{sd})-(\ref{sdic}) considering the conservative as well as the non-conservative truncation. 
\section{Global Existence Theorem}\label{existence}
This section is devoted to proving the global existence of a solution for the non-conservative truncation.
\begin{theorem}\label{existence_nc}
Consider $c \in B^{+}$ and $\phi_{i,j} \leq \frac{(i+j)}{\min\{i,j\}}$, $\phi_{i,j} \leq (i+j)$ and $\phi_{i,j} \leq (1+i+j)^{\alpha}$ $\forall$ $i,j \in \mathbb{N}$, $\alpha \in [0,1] \hspace{.2 cm} \forall i,j$. Suppose that $c^{n}_{i}(t)$ and $c^{0}_{i}$ are the solution and initial condition of the non-conservative approximation (\ref{truncation2})-(\ref{1b'}), then there exists a solution of the discrete OHS equation (\ref{sd})-(\ref{sdic}) in $\mathbb{R}^{+}$. 
\end{theorem}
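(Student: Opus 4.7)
The plan is to build a solution of the infinite-dimensional system as a pointwise limit (along a subsequence) of the non-conservative truncations $c^n$, relying on the estimates already prepared in Lemmas \ref{first} and \ref{second}. Since $c^0\in B^+$, we have $\|c^0\|<\infty$, so by the refined De la Vall\'ee--Poussin theorem one can select a non-negative convex $\gamma\in G$ with superlinear growth at infinity such that $\sum_{i=1}^\infty \gamma(i)\,c_i^0<\infty$. Proposition \ref{prop1} then feeds Lemma \ref{first} and yields a bound $\sum_{i=1}^n \gamma(i)\,c_i^n(t)\le Q_b(T)$ uniform in $n$ and in $t\in[0,T]$, for each of the three kernel regimes; in particular the sequences $\{c_i^n(\cdot)\}_n$ are uniformly bounded for every fixed $i$, and the family $\{c^n(t)\}_{n,t}$ is uniformly ``tight'' in the weighted space associated to $\gamma$.

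Next I would invoke Lemma \ref{second}, which provides a uniform-in-$n$ bound on the total variation $\int_0^T|\dot c_i^n(h)|\,dh\le \tilde Q_b(T)$ for every fixed $i$. Combined with uniform pointwise boundedness, Helly's selection theorem applies to each coordinate $c_i^n$, and a standard Cantor diagonal extraction produces a subsequence, still denoted $c^n$, and a limit sequence $c=(c_i)$ such that $c_i^n(t)\to c_i(t)$ for every $t\in[0,T]$ and every $i\in\mathbb{N}$. The pointwise limit $c_i$ inherits bounded variation, hence is continuous off an at most countable set; Fatou's lemma applied to the $\gamma$-moment bound shows $\sup_{t\in[0,T]}\sum_i i\,c_i(t)<\infty$, so $c(t)\in B^+$ and requirement~(1) of Definition \ref{defsol} is satisfied.

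The core of the argument is passing to the limit in the integrated form (\ref{solution}) starting from the truncated relation
\begin{equation*}
c_i^n(t)=c_i^0+\int_0^t\Bigl(\delta_{i\ge 2}c_{i-1}^n\!\!\sum_{j=1}^{i-1}\!j\phi_{i-1,j}c_j^n - c_i^n\!\sum_{j=1}^{i}\!j\phi_{i,j}c_j^n - c_i^n\!\sum_{j=i}^{n}\!\phi_{i,j}c_j^n\Bigr)dh,
\end{equation*}
valid for $2\le i\le n-1$. The two finite sums converge by dominated convergence, using the pointwise convergence of each $c_j^n$ and the uniform $\ell^1$ bound $\sum_j j c_j^n\le \|c^0\|$ as a majorant. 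The delicate piece is the infinite tail $c_i^n\sum_{j=i}^{n}\phi_{i,j}c_j^n$: the summation range depends on $n$, and for the quasilinear kernels the summand $\phi_{i,j}c_j^n$ is not individually bounded in $j$. This is precisely where the $\gamma$-moment estimate (\ref{qt}) is needed: since $\gamma(j)/j\to\infty$, for any $\varepsilon>0$ one can choose $N=N(\varepsilon)$ so that $\phi_{i,j}/\gamma(j)<\varepsilon$ uniformly for $j\ge N$, rendering the tail $\sum_{j\ge N}\phi_{i,j}c_j^n\le \varepsilon\sum_j\gamma(j)c_j^n\le \varepsilon Q_b(T)$ arbitrarily small, uniformly in $n$; the head $\sum_{j=i}^{N-1}\phi_{i,j}c_j^n$ then converges termwise. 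This is the main obstacle and the only nontrivial convergence in the proof.

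Having passed to the limit in the integral equation, one obtains (\ref{solution}) for each $i$, and condition~(2) of Definition \ref{defsol} follows from the same $\gamma$-weighted estimate together with (\ref{qtt}) of Lemma \ref{first}, since $\sum_j\phi_{i,j}c_j(h)$ is dominated (for fixed $i$) by a constant multiple of $\sum_j\gamma(j)c_j(h)$, whose time-integral is finite by Fatou. Finally, $T$ being arbitrary, a further diagonal extraction on an exhausting sequence $T_k\to\infty$ produces a global-in-time solution on $\mathbb{R}^+$, completing the proof.
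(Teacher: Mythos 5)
Your proposal follows essentially the same route as the paper: non-conservative truncation, the refined De la Vall\'ee--Poussin theorem to obtain $\gamma\in G$ with finite initial $\gamma$-moment, Lemmas \ref{first} and \ref{second} to secure uniform boundedness and bounded variation, Helly's selection with a diagonal extraction, and passage to the limit in the integrated equation to verify Definition \ref{defsol}. Your treatment of the $n$-dependent tail $\sum_{j=i}^{n}\phi_{i,j}c_j^n$ via the uniform smallness of $\phi_{i,j}/\gamma(j)$ for large $j$ is in fact more explicit than the paper's appeal to dominated convergence, but it is the same underlying mechanism, so the argument is correct and matches the paper's.
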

\begin{proof}
Let, $(E,\sigma,M)$ be a measure space with $E=\mathbb{N}, \sigma=\{S:S \subset \mathbb{N}\}$ and the measure $M$ defined by 
$$M(S)=\sum_{i \in S} c_{i}^{0}.$$
Since, $c^{0} \in B^{+}$, we get, $z \mapsto z \in L^1(E,\sigma,M)$. Then by the refined version of De la Vall\'ee- Poussin theorem, there exists a function $\gamma_0 \in G$ such that $z \mapsto \gamma_0(z) \in L^1(E,\sigma,M)$ meaning that
\begin{equation}\label{gamma_0}
\sum_{i=1}^{\infty}\gamma_0(i)c_{i}^{0} < \infty.
\end{equation}
Using the equation (\ref{truncatedmassnonconservation}) and Lemma \ref{second}, the sequence is locally bounded and absolutely continuous in $(0,T)$ for every $i \geq 1$ and $T \in (0,\infty)$. Thus, by Helly's theorem, there exists a subsequence of $\{c^{n}_{i}\}$ denoted as $\{\hat{c^{n}_{i}}\}$ and a sequence $\{c_{i}\}$ of functions of locally bounded variation such that 
\begin{equation}\label{limit}
\lim_{n \to \infty} \hat{c^{n}_{i}} = c_i(t), \hspace{.2 cm}\forall i \geq 1, t \geq 0.
\end{equation}
Also, if we let $c_{i}^{n}=0$ when $i >n$,
\begin{equation}\label{non-negativity}
||c^n(t)||=\sum_{i=1}^{n}ic^{n}_{i}(t) \leq \sum_{i=1}^{n}ic^{n}_{0 i}\leq ||c_{0}||
\end{equation}
which, followed by the use of (\ref{limit}) gives the non-negativity of the solution $c_{i}(t)$.
Further, since, $\gamma_{0} \in G$, equation (\ref{gamma_0}) and Lemma \ref{first} yield the following
 \begin{equation}\label{one}
 \sum_{i=1}^{n}\gamma_{0}(i)c^{n}_{i}(t) \leq Q(T) \hspace{.5 cm} \& \hspace{.5 cm} 0 \leq \int_{0}^{T}\sum_{i=1}^{n-1}\sum_{j=i}^{n} \gamma_{0}(i) \phi_{i,j}c^{n}_{i}(h)c^{n}_{j}(h)dh \leq Q(T),
 \end{equation}
 for every $n \geq 2, t \in [0,T]$ where $T \in [0,\infty)$.
 If we take $T \in (0,\infty)$ and $ m \geq 2$, then using the above equation (\ref{one}) and the non-negativity of $\gamma(i)$ give us
 \begin{equation}\label{two}
 \sum_{i=1}^{m}\gamma_{0}(i)c^{n}_{i}(t) \leq Q(T) \hspace{.5 cm} \& \hspace{.5 cm} 0 \leq \int_{0}^{T}\sum_{i=1}^{m-1}\sum_{j=i}^{m} \gamma_{0}(i) \phi_{i,j}c^{n}_{i}(h)c^{n}_{j}(h)dh \leq Q(T),
 \end{equation}
 Using (\ref{limit}), passing the limit as $n \to \infty$ in equation (\ref{one}) and $m \to \infty$ in (\ref{two}) yield
 \begin{equation}\label{three}
 \sum_{i=1}^{\infty}\gamma_{0}(i)c_{i}(t) \leq Q(T) \hspace{.5 cm} \& \hspace{.5 cm} 0 \leq \int_{0}^{T}\sum_{i=1}^{\infty}\sum_{j=i}^{\infty} \gamma_{0}(i) \phi_{i,j}c_{i}(h)c_{j}(h)dh \leq Q(T).
\end{equation}
Following on the lines of \cite{laurenccot2002discrete}, the application of (\ref{non-negativity}), (\ref{three}), definitions of the kernels and the properties of $\gamma_0$ confirm the Definition \ref{defsol}(2) as
\begin{equation}\label{Definition 1.1(2)}
\int_{0}^{T} \sum_{j=1}^{\infty} \phi_{i,j}c_{j}(h)dh <\infty.
\end{equation}
Now, by the definitions of $\phi_{i,j}$ and (\ref{non-negativity}), we have 
\begin{equation*}
\sum_{j=i}^{n}\phi_{i,j}c^{n}_{i}c^{n}_{j} \leq 2 jc^{n}_{i}c^{n}_{j} \leq 2 ||c_{0}||\mu_0(0),
\end{equation*}
for the first two parameters
and \begin{equation*}
\sum_{j=i}^{n}\phi_{i,j}c^{n}_{i}c^{n}_{j} \leq 2 jc^{n}_{i}c^{n}_{j} \leq 2 ||c_{0}||\mu_0(0)+\mu_0(0)^2,
\end{equation*}
for the last one.
Further, using (\ref{limit}) followed by the Lebesgue-dominated convergence theorem, the following holds 
\begin{equation}\label{convergence}
\lim_{n \to \infty} \int_{0}^{T} \Big(\sum_{j=i}^{n}\phi_{i,j}c^{n}_{i}c^{n}_{j}-\sum_{j=i}^{\infty}\phi_{i,j}c_{i}c_{j}\Big)(h)dh=0.
\end{equation}
The relations (\ref{limit}), (\ref{truncatedmassnonconservation}), (\ref{non-negativity}) and (\ref{convergence}) lead to the fulfilment of Definition \ref{defsol}(3). Consequently using (\ref{Definition 1.1(2)}), the Definition \ref{defsol}(1) follows, hence establishing the existence result for the problem (\ref{sd})-(\ref{sdic}) in $\mathbb{R}^{+}$.
\end{proof}

\section{Density Conservation}\label{density} 
The density conservation is demonstrated for the non-mass conserving truncation by the following result.
\begin{theorem}\label{density_nc}
Let $\phi_{i,j}\leqslant i j$ for all natural numbers $i$ and $j$.
Let $c=(c_i)\in B^+$ be a solution of the Safronov-Dubovski equation (\ref{sd}) satisfying all the conditions of Theroem  \ref{existence_nc}. Additionally, if (\ref{compactsupp}) holds, then the solution is density conserving satisfying
\begin{equation}
\label{densityconservation_nc}
\sum_{i=1}^{\infty} i c_{i}(t)= \sum_{i=1}^{\infty} i c_{i}(0). 
\end{equation}
\end{theorem}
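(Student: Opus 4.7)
The plan is to integrate \eqref{truncated1moment} over $[0,t]$ to obtain the exact identity
\[
\mu_1^n(t) \;=\; \mu_1^n(0) \;-\; R_n(t),\qquad R_n(t) \;:=\; \int_0^t \sum_{j=1}^{n-1} j\,\phi_{n,j}\, c_n^n(h)\,c_j^n(h)\, dh \;\geq\; 0,
\]
and then pass to the limit $n \to \infty$. Monotone convergence on the non-negative initial datum gives $\mu_1^n(0) \to \mu_1(0)$, so the density conservation \eqref{densityconservation_nc} reduces to proving (i) $\mu_1^n(t) \to \mu_1(t)$ for each $t \in [0,T]$ and (ii) $R_n(t) \to 0$ as $n \to \infty$.

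For (i), I would combine the pointwise convergence $c_i^n(t) \to c_i(t)$ coming from Theorem \ref{existence_nc} with the weighted moment bound from Lemma \ref{first}. The refined De la Vall\'ee--Poussin theorem applied to $c^0 \in B^+$ supplies some $\gamma_0 \in G$ with $\sum_i \gamma_0(i)\, c_i^0 < \infty$, and Lemma \ref{first} upgrades this to a uniform-in-$n$ bound $\sum_{i=1}^n \gamma_0(i)\, c_i^n(t) \leq Q(T)$. Since $\gamma_0 \in G$ ensures $\gamma_0(i)/i \to \infty$, the family $\{i\, c_i^n(t)\}_{i}$ is uniformly integrable with respect to the counting measure, uniformly in $n$; a Vitali (or generalised dominated convergence) argument then yields $\mu_1^n(t) \to \mu_1(t)$.

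For (ii), I would fix $\varepsilon > 0$, pick $N$ large enough that $j/\gamma_0(j) < \varepsilon$ for every $j > N$, and split the inner sum of $R_n$ at $N$. The low block $\sum_{j=1}^{N}$ is killed by the compact-support hypothesis \eqref{compactsupp}: $\max_{1 \leq j \leq N} \phi_{n,j} \to 0$ as $n \to \infty$, and combined with $c_n^n(h) \leq \|c_0\|/n$ and $\sum_{j=1}^{N} j\, c_j^n(h) \leq \|c_0\|$ the block vanishes as $n \to \infty$ for each fixed $N$. For the high block $\sum_{j=N+1}^{n-1}$, I would use $\phi_{n,j} \leq C(n+j)$, which is valid in all three kernel cases of Theorem \ref{existence_nc} (since $\min\{n,j\} \geq 1$ and $(1+n+j)^{\alpha} \leq 1+n+j$); the resulting pointwise estimate has the form $C' \|c_0\|\, (n\, c_n^n(h))$ multiplied by a factor of order $\varepsilon$ coming from the $\gamma_0$-tail bound $\sum_{j > N} j\, c_j^n(h) \leq \varepsilon\, Q(T)$. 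The crucial step -- and the one I expect to be the main obstacle -- is extracting the sharp decay $n\, c_n^n(h) \leq n\, Q(T)/\gamma_0(n) \to 0$ uniformly in $h$, which is sharper than the trivial $n c_n^n(h) \leq \|c_0\|$ and is exactly what the $\gamma_0$-estimate from Lemma \ref{first} buys us thanks to $\gamma_0(n)/n \to \infty$. Integrating, letting $n \to \infty$ and then $\varepsilon \to 0$, gives $R_n(t) \to 0$; combined with (i) this produces $\mu_1(t) = \mu_1(0)$.
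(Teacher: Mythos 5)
Your proposal is correct and follows the same skeleton as the paper: integrate the non-conservative first-moment identity \eqref{truncated1moment}, pass to the limit in $\mu_1^n(t)$ using the pointwise convergence \eqref{limit} together with the uniform superlinear bound $\sum_i \gamma_0(i)c_i^n(t)\leq Q(T)$ from Lemma \ref{first} (which gives uniform integrability of $\{ic_i^n\}$, exactly the paper's $\sup_{i\geq m} i/\gamma_0(i)$ tail estimate), and show that the boundary loss term vanishes. Where you genuinely differ is in the treatment of $R_n(t)$. The paper merely bounds $c_n^n\sum_{j=1}^{n-1}j\phi_{n,j}c_j^n$ by a constant and then appeals directly to \eqref{compactsupp}; since \eqref{compactsupp} is only a pointwise-in-$j$ statement and the sum has $n-1$ terms, that step is not immediate as written. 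Your splitting of the inner sum at $N$ --- low block killed by \eqref{compactsupp} for fixed $N$, high block made $O(\varepsilon)$ by the $\gamma_0$-tail bound $\sum_{j>N}jc_j^n\leq \varepsilon Q(T)$ together with $nc_n^n\leq \|c_0\|$ and $\phi_{n,j}\leq C(n+j)\leq 2Cn$ --- supplies precisely the missing uniformity. Your further observation that Lemma \ref{first} applied to the single index $i=n$ gives $n\,c_n^n(h)\leq nQ(T)/\gamma_0(n)\to 0$ uniformly in $h$ is also valid and is in fact stronger than you suggest: combined with $\sum_j j\phi_{n,j}c_j^n\leq 2n\|c_0\|\cdot(1/n)\cdot n = 2n\|c_0\|$, wait --- more simply, $\sum_{j=1}^{n-1}j\phi_{n,j}c_n^nc_j^n\leq 2n\,c_n^n\sum_j jc_j^n\leq 2\|c_0\|Q(T)\,n/\gamma_0(n)\to 0$, so for these kernels the entire loss term vanishes without invoking \eqref{compactsupp} at all; this is a mild strengthening of the theorem that the paper's cruder constant bound cannot see. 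The only cosmetic caveat is that all limits should be taken along the Helly subsequence from Theorem \ref{existence_nc}, which you implicitly do.
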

\begin{proof}
To establish the density conservation, let us consider
\begin{align}\label{d_nc}
\Big|\sum_{i=1}^{\infty}i c_{i}(t)-\sum_{i=1}^{\infty} i c_{i}(0)\Big| &=\Big|\big(\sum_{i=1}^{m}+\sum_{i=m+1}^{\infty}\big) i c_{i}(t)- \big(\sum_{i=1}^{m}+\sum_{i=m+1}^{\infty}\big) i c_{i}(0)\Big|.
\end{align}
Now, writing the equation (\ref{truncated1moment}) as 
\begin{equation}
\label{d_nc1}
\sum_{i=1}^{n}ic_{i}(0)=\sum_{i=1}^{m}ic^{n}_{i}(t)+\sum_{i=m+1}^{n} i c_{i}^{n}(t)-\sum_{j=1}^{n-1}j \phi_{n,j}c^{n}_{n}(t)c^{n}_{j}(t).
\end{equation}
Using (\ref{d_nc1}) in (\ref{d_nc}), one can obtain
\begin{align}\label{d_nc2}
\nonumber
\Big|\sum_{i=1}^{\infty}i c_{i}(t)-\sum_{i=1}^{\infty} i c_{i}(0)\Big| & \leq \sum_{i=1}^{m}i \Big|c_{i}(t)-c^{n}_{i}(t)\Big|+ \sum_{i=m+1}^{\infty}i c_{i}(t)+\sum_{i=m+1}^{n}i c^{n}_{i}(t)+\sum_{i=n+1}^{\infty} i c_{i}(0)\\
& +\Big|c_{n}^{n}(t) \sum_{j=1}^{n-1} j \phi_{n,j}c_{j}^{n}(t)\Big|.
\end{align}

Before passing the limit $n \to \infty$, the boundedness of the last expression in the above equation
needs to be dealt and is given as for $j\phi_{i,j} \leq (i+j)$
\begin{align*}
\Big|c_{n}^{n}(t) \sum_{j=1}^{n-1} j \phi_{n,j}c_{j}^{n}(t)\Big| & \leq |c_{n}^{n}(t) \sum_{j=1}^{n} j \phi_{n,j}c_{j}^{n}(t)| \leq  2 ||c_0|| \mu_0(0),
\end{align*}
in case of $\phi_{i,j} \leq (i+j)$,
\begin{align*}
\Big|c_{n}^{n}(t) \sum_{j=1}^{n-1} j \phi_{n,j}c_{j}^{n}(t)\Big| \leq  2 ||c_0||^2,
\end{align*}
and finally for the kernel $\phi_{i,j} \leq (1+i+j)^{\alpha}$, we have
\begin{align*}
\Big|c_{n}^{n}(t) \sum_{j=1}^{n-1} j \phi_{n,j}c_{j}^{n}(t)\Big| \leq  2 ||c_0||^2+||c_0|| \mu_0(0).
\end{align*}
 Now, having (\ref{compactsupp}), then passing the limit in (\ref{d_nc2}) and using (\ref{limit}) provide
\begin{align*}
\nonumber
\lim_{n \to \infty} \Big|\sum_{i=1}^{\infty}i c_{i}(t)-\sum_{i=1}^{\infty} i c_{i}(0)\Big| & \leq  \sup_{i \geq m} \frac{2 i c_{i}^{n}(t)\gamma_0(i)}{\gamma_0(i)}.
\end{align*}
Hence, the first result given in equation (\ref{three}) and the properties of $\gamma_0$ lead to accomplish our claim.
\end{proof}

\section*{Acknowledgement}
Rajesh Kumar wishes to thank Science and Engineering Research Board (SERB), Department of Science and Technology (DST), India, for the funding through the project MTR/2021/000866.

\section*{Declarations of Interest}
There are no conflicts of interest to this work.

\bibliography{sonali_final}
\bibliographystyle{ieeetr}
\end{document}